\begin{document}

\title{Construction of MIMO MAC Codes Achieving the Pigeon Hole Bound}
\author{\IEEEauthorblockN{Toni Ernvall and Roope Vehkalahti}
\IEEEauthorblockA{Turku Center for Computer Science and Department of Mathematics, University of Turku, Finland\\
e-mails: \{tmernv, roiive\}@utu.fi\\
}
}

\maketitle

\newtheorem{definition}{Definition}[section]
\newtheorem{thm}{Theorem}[section]
\newtheorem{proposition}[thm]{Proposition}
\newtheorem{lemma}[thm]{Lemma}
\newtheorem{corollary}[thm]{Corollary}
\newtheorem{exam}{Example}[section]
\newtheorem{conj}{Conjecture}
\newtheorem{remark}{Remark}[section]

\newcommand{\La}{\mathbf{L}}
\newcommand{\h}{{\mathbf h}}
\newcommand{\Z}{{\mathbf Z}}
\newcommand{\R}{{\mathbf R}}
\newcommand{\C}{{\mathbf C}}
\newcommand{\D}{{\mathcal D}}
\newcommand{\F}{{\mathbf F}}
\newcommand{\HH}{{\mathbf H}}
\newcommand{\OO}{{\mathcal O}}
\newcommand{\G}{{\mathcal G}}
\newcommand{\A}{{\mathcal A}}
\newcommand{\B}{{\mathcal B}}
\newcommand{\I}{{\mathcal I}}
\newcommand{\E}{{\mathcal E}}
\newcommand{\PP}{{\mathcal P}}
\newcommand{\Q}{{\mathbf Q}}
\newcommand{\M}{{\mathcal M}}
\newcommand{\separ}{\,\vert\,}
\newcommand{\abs}[1]{\vert #1 \vert}

\begin{abstract}
This paper  provides a general construction method for multiple-input multiple-output multiple access channel codes (MIMO MAC codes) that have
so called generalized full rank property. The achieved constructions  give a positive  answer to  the question whether it is generally  possible to reach the so called pigeon hole bound, that is an upper bound for the decay of determinants of MIMO-MAC channel codes.
\end{abstract}
\section{Introduction}
In MIMO MAC  the  DMT  optimality criteria can be given by splitting the whole error probability space to separate error events and then giving a criteria for each event separately. In the case where each user is in error, the criteria   given in \cite{CoGaBo}  closely resembles that of the classical NVD condition used in single user case. This criteria would give us a natural goal if we would like to build DMT achieving codes.
Unfortunately  in \cite{LuHoVeLa} it was proved that this criteria is too tight  in the case where the codes of the single users  are lattice  space-time codes. The  \emph{pigeon hole bound} in \cite{LuHoVeLa} proves that, irrespective of the code design, the determinants of the
overall code matrices will decay with  a polynomial  speed.

In  \cite{decaypaperi} it was proved that the two user single antenna code (BB-code)  given in \cite{bb} do achieve the pigeon hole bound. In this paper we are giving a wast  generalization of  BB-code  to general MAC-MIMO. The  codes we will build  fulfill the so called \emph{generalized rank criteria} and  do reach the pigeon hole bound.

The single user codes we are using are based on the \emph{multi-block} codes from division algebras \cite{Lu}.
This  approach  has been taken in several recent papers on MIMO-MAC.  However, in these papers the full rank criteria has been achieved by using either transcendental  elements \cite{bb} or algebraic elements with high degree \cite{matriisi}. Both of these methods make it extremely difficult to measure the decay of the codes and likely  lead to bad decay.

Instead of the usual algebraic independence strategy  we will use \emph{valuation theory} to achieve the full rank condition. This  technical tool allows us  to use algebraic elements with low degree. By applying Galois theoretic method of Lu et al. \cite{LuHoVeLa} and  methods from Diophantine approximation, originally introduced by  Lahtonen  et al. in \cite{decaypaperi}, we will prove  that our codes achieve   good decay and in particular reach  the pigeon hole bound.

As a warm up we discuss, when it is possible to achieve the generalized full rank condition for two users, each having a single transmit antenna. In  particular these results clarify the  construction of BB-code in \cite{bb}. Then in Section  \ref{construction} we will give our general construction and state that such an construction always exists and that each code does fulfill the generalized  rank criteria. In Section \ref{decayanalysis} we  analyze the decay of the constructed codes.  In particular  Theorem  \ref{inertdecay3} gives a general lower bound and Corollary  \ref{pigeon2} shows that our codes achieves the pigeon hole bound.  In the last section we give few explicit examples of our codes.

 \section{Decay function, multiuser code, and other definitions}
In this paper we are considering  MIMO MAC code design in the Rayleigh  flat fading channel.
 We suppose that we have  $U$ users, each having  $n_t$ antennas and that the receiver has $n_r$ antennas and  complete channels state information.
 We also suppose that the fading for each user  stays stable for $Un_t$ time units. Let us refer to the channel matrix of the $i$th  user with $H_i\in M_{n_r\times n_t}$ and let us suppose that each of these have  i.i.d complex Gaussian random variables with zero mean and unit variance as coefficients. In this scenario the base station receives
$$
Y=\sum_{i=1}^{U} H_iX_i +N,
$$
where, $X_i\in M_{n_t\times Un_t}(\C)$,  is the transmitted codeword from the $i$th user, and $N \in \M_{n_r\times n_t U}(\C)$ presents the noise        having i.i.d complex Gaussian random variables as coefficients.

In this scenario  multiuser MIMO signal is  a $Un_t \times Un_t$ matrix where the rows $(j-1)n_t + 1, (j-1)n_t + 2, \dots, (j-1)n_t + n_t$ represent $j$th user's signal ($j=1, \dots, U$).

We  suppose that  each user applies a lattice space-time code $\textbf{L}_j \subseteq \M_{n_t \times Un_t}, j=1, \ldots, U$. We  also assume that each user's lattice is of full rank $r=2Un_{t}^2$, and denote the basis of the  lattice $\textbf{L}_j$ by $B_{j,1}, \ldots, B_{j,r}$. Now the code associated with the $j$th user is a restriction of lattice $\textbf{L}_{j}$
$$
\textbf{L}_{j} (N_{j}) = \left\{ \sum_{i=1}^{r} b_{i} B_{j,i} | b_{i} \in \Z, -N_{j} \leq b_{i} \leq N_{j} \right\},
$$
where $N_{j}$ is a given positive number.

Using these definitions the \emph{$U$-user MIMO MAC code} is $(\textbf{L}_{1} (N_{1}), \textbf{L}_{2} (N_{2}), \dots, \textbf{L}_{U} (N_{U}))$.

For a matrix $M(X_{1}, \ldots, X_{U})=(X_{1}^{T}, X_{2}^{T}, \ldots, X_{U}^{T})^{T}$ where $X_j \in \textbf{L}_{j}(N_{j})$ that $j$th user has sent, we define
$$
D(N_{1}, \ldots, N_{U}) = \min_{X_{j} \in \textbf{L}_{j}(N_{j}) \setminus \{ 0 \} } \left| \det M(X_{1}, \ldots, X_{U}) \right|.
$$
This is called  the \emph{decay function}. In the  special case $N_{1} = \ldots = N_{U} = N$ we write
$$
D(N) = D(N_{1} = N , \ldots, N_{U} = N).
$$

If we have a $U$-user code with decay function $D(N_{1}, \ldots, N_{n})$, such that $D(N_{1}, \ldots, N_{n}) \neq 0$ for all $N_{1}, \ldots, N_{U} \in \Z_{+}$,  we say that the code satisfies \emph{ (generalized) rank criterion}.

\section{2-user code}
In \cite{bb}  Badr and Belfiore introduced  a $2$-user single antenna MAC  code where the matrix coefficients were from the field $\Q(i,\sqrt{5})$. In the following  we give a complete characterization of when their construction method leads to codes with generalized full rank property.
\begin{thm}
\label{2olemassa}
Let $K/\Q$ and $L/K$ be two field extensions of degree $2$ and $a,b,c,d \in L$. Let also $\sigma$ be the non-trivial element in the Galois group $Gal(L/K)$. Define
$$
C = \left\{ \left(
         \begin{array}{cc}
           ax & b \sigma(x) \\
           cy & d \sigma(y) \\
         \end{array}
       \right) | x,y \in \OO_{L}^{*} \right\}.
$$
There exists a matrix in $C$ with zero determinant if and only if
\begin{equation}
\label{kaava1}
\left|
         \begin{array}{cc}
           N(a) & N(b) \\
           N(c) & N(d) \\
         \end{array}
       \right|=0,
\end{equation}
where the function $N=N_{L/K}$ denotes the norm of extension $L/K$.
\end{thm}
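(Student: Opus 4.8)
The plan is to turn the vanishing of the determinant into a multiplicative equation in $L^{*}$ and then invoke Hilbert's Theorem~90 for the quadratic extension $L/K$. Expanding the determinant of the displayed matrix gives $ad\,x\sigma(y)-bc\,\sigma(x)y$, so a matrix in $C$ has zero determinant exactly when
$$ad\,x\sigma(y)=bc\,\sigma(x)y$$
for some nonzero $x,y\in\OO_{L}$. I would first dispose of the degenerate situation in which $ad=0$ or $bc=0$: then one of the two terms is identically zero, so a zero-determinant matrix exists if and only if the other product also vanishes, and this matches (\ref{kaava1}) because a nonzero element has nonzero norm and $N(a)N(d)=N(ad)$, $N(b)N(c)=N(bc)$.

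In the main case $ad\neq 0\neq bc$ the equation above is equivalent to $ad/bc=\frac{\sigma(x)\,y}{x\,\sigma(y)}$. Writing $w=x/y$, the right-hand side is $\sigma(w)/w$; conversely every $w\in L^{*}$ is a quotient of two nonzero elements of $\OO_{L}$, so the set of values taken by $\sigma(x)y/(x\sigma(y))$ as $x,y$ range over the nonzero elements of $\OO_{L}$ is exactly the image of the map $w\mapsto\sigma(w)/w$ on $L^{*}$. Hence the theorem reduces to the statement that $ad/bc$ lies in that image if and only if (\ref{kaava1}) holds. The forward direction is then immediate, since for a quadratic extension $N(z)=z\sigma(z)$, whence $N(\sigma(w)/w)=\sigma(w)\sigma^{2}(w)/(w\sigma(w))=1$ because $\sigma^{2}=\mathrm{id}$; thus $N(ad/bc)=1$, i.e.\ $N(a)N(d)=N(b)N(c)$.

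For the converse I would assume (\ref{kaava1}), which in the main case reads $N(ad/bc)=1$. Since $L/K$ is cyclic of degree~$2$ with Galois group generated by $\sigma$, Hilbert's Theorem~90 yields $\beta\in L^{*}$ with $ad/bc=\sigma(\beta)/\beta$. Choosing a nonzero integer $m\in\Z$ with $m\beta\in\OO_{L}$ and taking $x=m\beta$, $y=m$ (so that $\sigma(y)=y$) would give $\sigma(x)y/(x\sigma(y))=\sigma(\beta)/\beta=ad/bc$, so the associated matrix in $C$ is singular. I expect the only genuinely nontrivial step to be this appeal to Theorem~90 — recognising that the twisted ratio coming out of the determinant is precisely a norm-one coboundary for $\langle\sigma\rangle$ — together with the routine point of clearing denominators so that the witnesses $x,y$ can be taken inside $\OO_{L}$ rather than merely in $L$.
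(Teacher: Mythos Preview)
Your argument is correct and follows essentially the same route as the paper: compute the determinant, reduce (after disposing of the degenerate factorisation cases) to the question of whether $ad/bc$ has norm~$1$, and invoke Hilbert~90 for the cyclic extension $L/K$ to produce a witness, clearing denominators to land in $\OO_L$. The only cosmetic differences are that the paper takes the witness to be $(x,y)=(w,1)$ rather than your $(m\beta,m)$, and handles the forward direction by applying $N$ directly to both sides of $ad\,x\sigma(y)=bc\,\sigma(x)y$ rather than first rewriting via $w=x/y$.
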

\begin{proof}
Assume first that we have a matrix
$$\left(
         \begin{array}{cc}
           ax & b \sigma(x) \\
           cy & d \sigma(y) \\
         \end{array}
       \right)$$
        with zero determinant. This means that $adx\sigma(y)=bc\sigma(x)y$, which gives $N(a)N(d)N(x)N(\sigma(y))=N(b)N(c)N(\sigma(x))N(y)$. Continuing we get  $N(a)N(d)=N(b)N(c)$ \emph{i.e.} $N(a)N(d)-N(b)N(c)=0$.

Assume then that  $N(a)N(d)-N(b)N(c)=0$. If $b$ or $c$ is zero then $N(a)N(d)=0$ \emph{i.e.} $a$ or $d$ is zero and then for all $x,y \in \OO_L$ we have
 $$
 \left|
         \begin{array}{cc}
           ax & b \sigma(x) \\
           cy & d \sigma(y) \\
         \end{array}
       \right| = 0.
 $$
 Otherwise $N(\frac{ad}{bc})=1$. Then by Hilbert 90 we have some $z \in L$ such that $\frac{ad}{bc}=\frac{\sigma(z)}{z}$. Then write $z=\frac{w}{n}$ with $w \in \OO_{L}^{*}$ and $n \in \Z$. This gives $\frac{ad}{bc}=\frac{\sigma(w)}{w}$ \emph{i.e.} $adw-bc\sigma(w)=0$. This means that the determinant of
$$
\left(
         \begin{array}{cc}
           aw & b \sigma(w) \\
           c1 & d \sigma(1) \\
         \end{array}
       \right)
$$
is zero.
\end{proof}

\section{Multi access code with several transmission antennas}\label{construction}

\subsection{Construction of $U$-user code for $n_t$ transmission antennas}

From now on we concentrate  on the  scenario where we have $U \in \Z_+$ users and each user has $n_t \in \Z_+$ transmission antennas.  Throughout this section we assume $K$ to be an imaginary quadratic extension of $\Q$ with class number $1$, $L$ is a cyclic Galois extension of $K$ of degree $Un_t$, such that $L=K(\alpha)$ with $\alpha \in \R$, $\sigma$ a generating element in $Gal(L/K)$ and $p \in \OO_{K}$ an inert prime in $L/K$. We also define $\tau = \sigma^{U}$ and $F$ to be the fixed field of $\tau$. So we have $[L:F]=n_t$, $[F:K]=U$, $Gal(L/F)=<\tau>$, and $Gal(F/K)=<\sigma_{F}>$ where $\sigma_{F}$ is a restriction of $\sigma$ in $F$. Let  $v=v_p$ be the   $p$-adic valuation  of the field L. In this section, when we say that $L/K$, $p$, and $\sigma$ are suitable we mean that they are as above.

Due to the space limitations, we skip the proof of the following proposition.
\begin{proposition}\label{existence}
For every complex quadratic field $K$, having class number 1, and for any $U$ and for any $n_t$ we have   a suitable degree $n_t U$ extension $L/K$,  prime $p\in \OO_K$ and automorphism $\sigma \in Gal(L/K)$.
\end{proposition}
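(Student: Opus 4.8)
The plan is to construct the tower $K \subseteq F \subseteq L$ explicitly via class field theory and then verify, one by one, the four structural requirements: cyclicity of $L/K$, the embedding $L = K(\alpha)$ with $\alpha \in \R$, class number $1$ of $K$ (given by hypothesis, so nothing to do there), and the existence of an inert prime. First I would recall that $K$ is one of the finitely many imaginary quadratic fields of class number $1$; fix $n = Un_t$. To obtain a cyclic degree-$n$ extension of $K$, I would appeal to the Grunwald--Wang theorem (or, more concretely, to the fact that ray class fields of $K$ contain cyclic subextensions of every degree): since $K$ has infinitely many rational primes splitting in it, one can produce a cyclic extension $L/K$ of degree exactly $n$ inside a suitable ray class field. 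An alternative, more hands-on route is to take a rational prime $q$ such that $q$ splits in $K$ and $n \mid q-1$, pick a prime $\mathfrak{q} \mid q$ in $\OO_K$, and let $L$ be the degree-$n$ subfield of the ray class field $K(\mathfrak{q})$; the cyclicity comes from $(\OO_K/\mathfrak{q})^* \cong \Z/(q-1)\Z$ being cyclic.

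The second step is to arrange $L = K(\alpha)$ with $\alpha$ real. Here I would use that $L/\Q$ is Galois of degree $2n$ when $L/K$ is Galois and $K/\Q$ is Galois (which it is, being quadratic), and that complex conjugation acts on $L$; the fixed field $L^+$ of complex conjugation is the maximal totally real subfield, with $[L:L^+] \le 2$. A cleaner approach: since we are free to choose $L$, build it so that it contains a real subfield generating it over $K$ — for instance by choosing the ray class modulus to be a rational prime $q$ (so the conductor is stable under complex conjugation), which forces $L/\Q$ to be Galois; then $L = K \cdot L^+$ where $L^+ = L \cap \R$ is cyclic over $\Q$ of degree $n$, so any primitive element $\alpha$ of $L^+/\Q$ works and lies in $\R$. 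One must check $[L^+:\Q] = n$ rather than $2n$, which holds precisely because $K \not\subseteq L^+$ (as $K$ is imaginary); this is where a little care is needed.

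The third and genuinely substantive step is the inert prime $p \in \OO_K$. By Chebotarev's density theorem applied to the Galois group $\Gal(L/K) \cong \Z/n\Z$, the set of primes of $\OO_K$ whose Frobenius is a chosen generator $\sigma$ has positive density $1/n$; any such unramified prime (and all but finitely many are unramified) is inert in $L/K$ because its decomposition group is all of $\langle\sigma\rangle = \Gal(L/K)$ and the residue extension is cyclic. So an inert prime $p$ not only exists but there are infinitely many; one then picks the generator $\sigma$ of $\Gal(L/K)$ and sets $\tau = \sigma^U$, $F = L^{\langle\tau\rangle}$, and the degree bookkeeping $[L:F] = n_t$, $[F:K] = U$, $\Gal(L/F) = \langle\tau\rangle$, $\Gal(F/K) = \langle\sigma_F\rangle$ is automatic from the cyclicity of $\Gal(L/K)$ and the correspondence of Galois theory. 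The main obstacle is the simultaneous satisfaction of all constraints: we need $L/K$ cyclic of the exact degree $n$, \emph{and} $\alpha$ real, \emph{and} an inert prime, with these not interfering. The resolution is that the "real" requirement is a constraint on how $L$ sits over $\Q$ (choose the conductor $\Q$-rational), while the inert prime is then obtained a posteriori from Chebotarev on the already-fixed $L/K$ — so the constructions are carried out in that order and do not conflict. I would also note the degenerate-looking case $Un_t = 1$ is excluded since $U, n_t \in \Z_+$ with the construction only interesting for $Un_t \ge 2$, but even $Un_t$ arbitrary causes no trouble as the ray class field method produces every degree.
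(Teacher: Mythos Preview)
The paper explicitly omits this proof (``Due to the space limitations, we skip the proof''), so there is no argument to compare against line by line. The construction implicit in the paper's examples section, however, is more direct than yours: take a totally real cyclic extension $E/\Q$ of degree $n=Un_t$ sitting inside some $\Q(\zeta_q)$ (choose a rational prime $q\equiv 1\pmod{2n}$ with $q\nmid\operatorname{disc}(K)$, and let $E$ be the unique degree-$n$ subfield of $\Q(\zeta_q)$, automatically real since $n\mid(q-1)/2$), then set $L=KE$. Because $E$ is real and $K$ is imaginary, $E\cap K=\Q$, so $\Gal(L/K)\cong\Gal(E/\Q)$ is cyclic of degree $n$, and any primitive element $\alpha$ of $E/\Q$ is real with $L=K(\alpha)$. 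Your Chebotarev step for the inert prime then finishes the job unchanged.

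Your ray class field route aims at the same target but has a gap in the second step. With modulus a split prime $\mathfrak{q}\mid q$ you do get $\Gal(L/K)$ cyclic, but $L$ is typically not stable under complex conjugation $c$ (since $c(\mathfrak{q})=\bar{\mathfrak{q}}\neq\mathfrak{q}$), and one checks that $L=K(\alpha)$ with $\alpha\in\R$ forces $c(L)=L$; so no real generator need exist. Your proposed fix, taking the modulus to be $(q)$ with $q$ split, does make the \emph{full} ray class field Galois over $\Q$, but now $(\OO_K/(q))^*\cong\F_q^*\times\F_q^*$ is not cyclic: there are several cyclic degree-$n$ subextensions, most still not $c$-stable, so ``the degree-$n$ subfield'' is ill-defined and the conclusion that $L^+=L\cap\R$ is cyclic over $\Q$ does not follow. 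Indeed, even when $L/\Q$ is Galois, $\Gal(L/\Q)$ could be dihedral rather than abelian, in which case $L^+$ need not be Galois over $\Q$ at all. The cleanest repair is simply to start from a real cyclic $E/\Q$ as above and form the compositum; within your framework this amounts to singling out the cyclotomic subfield $K\cdot E\subset K(\zeta_q)$ inside the ray class field, which is simultaneously cyclic over $K$ and $c$-stable by construction.
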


Now we are ready for our construction.
Let $x_j \in \OO_{L}$ for all $j=1, \dots ,n_t$. Define $M=M(x_1, x_2, \dots, x_{n_t})$ to be
$$
\left(
         \begin{array}{ccccc}
           x_1   &     p \tau(x_{n_t}) & p \tau^2(x_{n_t-1}) & \ldots & p \tau^{n_t-1}(x_2) \\
           x_2   &       \tau(x_1) & p \tau^2(x_{n_t}) & \ldots & p \tau^{n_t-1}(x_3) \\
           x_3   &       \tau(x_2) &   \tau^2(x_1) & \ldots & p \tau^{n-1}(x_4) \\
           \vdots & \vdots         & \vdots          &        & \vdots              \\
           x_{n_t-1}   &   \tau(x_{n_t-2}) &   \tau^2(x_{n_t-3}) & \ldots & p \tau^{n_t-1}(x_{n_t}) \\
           x_{n_t}   &       \tau(x_{n_t-1}) &   \tau^2(x_{n_t-2}) & \ldots &   \tau^{n_t-1}(x_1) \\
         \end{array}
       \right)
$$
\emph{i.e.} a matrix representation of an element in the cyclic  algebra $(L/F,\tau,p)$.
The following lemma proofs that  $(L/F,\tau,p)$ is a division algebra.

\begin{lemma}
\label{determinanttivaluaatio}
Let $x_j \in \OO_{L}$ for all $j=1, \dots , n_t$ such that $x_l \neq 0$ for some $l$ and $\min(v(x_1), \dots, v(x_{n_t}))=0$. Then we have
$$
\det(M(x_1, x_2, \dots, x_{n_t})) \neq 0
$$
and
$$
v(\det(M(x_1, x_2, \dots, x_{n_t}))) \leq n_t - 1.
$$
\end{lemma}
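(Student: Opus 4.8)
The plan is to recognise the matrix $M(x_1,\dots,x_{n_t})$ as a matrix representing the element
$$
\xi = x_1 + e\,x_2 + e^2 x_3 + \cdots + e^{n_t-1}x_{n_t}
$$
of the cyclic algebra $\mathcal{A} = (L/F,\tau,p)$, where $e^{n_t}=p$ and $e\ell=\tau(\ell)e$ for $\ell\in L$; then $\det M = \pm\,\mathrm{Nrd}_{\mathcal{A}/F}(\xi)$, and the whole statement becomes a question about this reduced norm read $p$-adically. First I would note that, since $p$ is inert in $L/K$ and $K\subseteq F\subseteq L$, the residue ring $\OO_F/p\OO_F$ is a field and $p\OO_L$ stays prime over $p\OO_F$, so $p$ is inert in $L/F$ as well. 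Completing at $p$, the extension $L_p=L\otimes_F F_p$ is therefore the unramified extension of $F_p$ of degree $n_t$, and $p$ is a uniformiser of $F_p$. It is a standard fact of local division-algebra theory that $(L_p/F_p,\tau,p)=\mathcal{A}\otimes_F F_p$ is then a central division algebra over $F_p$; consequently $\mathcal{A}$, which embeds into it, has no zero divisors and is itself a division algebra (this is the division-algebra assertion mentioned before the lemma).

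Next I would carry the $p$-adic valuation $v=v_p$ (normalised by $v(p)=1$, so that it is $\Z$-valued on $L$ because $p$ is unramified there) to the unique valuation $w$ on $\mathcal{A}\otimes_F F_p$ extending it. Here $w$ restricts to $v$ on $L_p$, one has $w(\tau^i(\ell))=v(\ell)$ for all $i$ and $\ell\in L$, and $w(e)=1/n_t$ because $e^{n_t}=p$. The key computation is the valuation of $\xi$: the nonzero terms of $\xi$ satisfy
$$
w(e^i x_{i+1}) = \frac{i}{n_t} + v(x_{i+1}), \qquad i=0,1,\dots,n_t-1,
$$
and these numbers have pairwise distinct fractional parts $i/n_t$, so by the ultrametric inequality there is no cancellation and $w(\xi)=\min_{0\le i\le n_t-1}\bigl(\tfrac{i}{n_t}+v(x_{i+1})\bigr)$. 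In particular $\xi\neq 0$ (this is also clear from $x_l\neq 0$), hence $\det M=\pm\,\mathrm{Nrd}(\xi)\neq 0$ because $\mathcal{A}$ is a division algebra. Under the hypotheses $x_j\in\OO_L$ and $\min_j v(x_j)=0$, every $v(x_{i+1})\ge 0$ and at least one of them vanishes; letting $i_0$ be the least index with $v(x_{i_0+1})=0$, a short case check gives $w(\xi)=i_0/n_t$ with $0\le i_0\le n_t-1$.

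Finally I would invoke the relation $v\bigl(\mathrm{Nrd}_{\mathcal{A}\otimes F_p/F_p}(a)\bigr) = n_t\, w(a)$, valid for every $a$ in a central division algebra of degree $n_t$ over the local field $F_p$: both sides are homomorphisms to $\Z$ that extend $v$ and agree on the uniformiser $e$, where $v(\mathrm{Nrd}(e))=1$. Since the reduced norm is compatible with base change, $\mathrm{Nrd}_{\mathcal{A}\otimes F_p/F_p}(\xi)=\mathrm{Nrd}_{\mathcal{A}/F}(\xi)=\pm\det M$, and therefore
$$
v(\det M) = n_t\, w(\xi) = n_t\cdot\frac{i_0}{n_t} = i_0 \le n_t-1,
$$
which is exactly the claimed bound, together with $\det M\neq 0$.

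The step I expect to be the main obstacle is the bookkeeping at the two ends rather than the valuation computation in the middle: cleanly identifying $\det M$ with the reduced norm of $\xi$ in the correct normalisation, and pinning down the precise relation between the valuation of the division algebra and the reduced norm (equivalently, verifying that $p^k$ is not a norm from $L$ for $0<k<n_t$, which is what makes $\mathcal{A}$ division). An alternative, more self-contained route would be to reduce $M$ modulo $p$, observe that it becomes lower triangular with diagonal $x_1,\tau(x_1),\dots,\tau^{n_t-1}(x_1)$, and then peel off powers of $p$ by column operations over the discrete valuation ring $\OO_{L_p}$; but this essentially re-derives the same valuation-theoretic content by hand.
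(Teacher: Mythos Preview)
Your argument is correct, but it is not the one the paper gives. The paper proceeds by the elementary route you sketch in your last paragraph: it lets $l$ be the least index with $v(x_l)=0$, observes that every entry in the first $l-1$ rows of $M$ has positive $p$-valuation (each such entry is either a $\tau$-conjugate of some $x_1,\dots,x_{l-1}$ or carries an explicit factor $p$), and then reads off that every term in the Leibniz expansion of $\det M$ has valuation at least $l-1$, with a single term $\pm p^{\,l-1}N_{L/F}(x_l)$ achieving it exactly. This gives $v(\det M)=l-1\le n_t-1$ directly, and in particular $\det M\neq 0$.

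Your proof repackages the same computation inside the valuation theory of the completed cyclic algebra: you pass to $(L_p/F_p,\tau,p)$, use that it is a division algebra because $p$ is inert, and compute $w(\xi)$ via the distinct fractional parts of $i/n_t$, then convert back through $v(\mathrm{Nrd})=n_t\,w$. The payoff of your approach is that the division-algebra assertion preceding the lemma and the valuation bound are obtained simultaneously and without any case analysis; it also explains \emph{why} the exponent is exactly $l-1$, namely because $e$ has value $1/n_t$. The paper's approach, by contrast, is entirely self-contained and needs no facts about local division algebras or reduced norms, which is convenient given the intended audience. The two arguments are really the same computation seen from opposite ends: the ``dominant term'' $\pm p^{\,l-1}N_{L/F}(x_l)$ in the paper's expansion is precisely the term $e^{\,l-1}x_l$ of minimal $w$-value in your $\xi$, pushed through the regular representation.
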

\begin{proof}
Write $M=M(x_1, x_2, \dots, x_{n_t})$ and $N=N_{L/F}$. Assume first that $v(x_1)=0$. Then the determinant is $N(x_1) + py$ for some $y \in \OO_L$ and hence we have $v(\det(M))=\min(v(N(x_1)),v(py))=0$. Assume then that $v(x_1),v(x_2), \dots, v(x_{l-1}) > 0$ and $v(x_l)=0$ with $1 < l \leq n_t$. Notice that in this case all the other elements $a$ of matrix $M$ than those in the left lower corner  block of side length $n_t -l+1$ have $v(a)>0$. Either they have coefficient $p$ or they are automorphic images of elements $x_1 , x_2 , \dots , x_{l-1}$. Now $\det(M) = \pm p^{l-1}N(x_l) + p^{l}z$ for some $z \in \OO_L$ since all the other terms except $\pm p^{l-1}N(x_l)$ have at most $n_t -l$ factors from this left lower corner and hence at least $n_t-(n_t-l)=l$ terms have factor $p$. This gives that $v(\det(M))=\min(v(p^{l-1}N(x_l)),v(p^{l}z))=l-1 \leq n_t -1$.
\end{proof}

\begin{definition}
Define $M_j=M(x_{j,1}, x_{j,2}, \dots, x_{j,n_t})$ for all $j=1, \dots ,U$. In our multi access system the code $\mathcal{C}_j$ of $j$th user consists of $n_t \times Un_t$ matrices $B_j=$
$$
 \left( M_j , \sigma(M_j) , \sigma^2(M_j) , \dots , p^{-k} \sigma^{j-1}(M_j) , \dots , \sigma^{U-1}(M_j) \right)
$$
where $k$ is any rational integer strictly greater than $\frac{U(n_t-1)}{2}$ and $x_{j,l} \neq 0$ for some $l$. Here  $k$ is same for all users. The the  whole code $C_{U,n_t}$ consists of matrices

$$
A = \left(
         \begin{array}{c}
           B_1   \\
           B_2    \\
           \vdots              \\
           B_U \\
         \end{array}
       \right),
$$
where $B_j \in \mathcal{C}_j$ for all $i=1, \dots, U$. This means that the matrices $A \in C_{U,n_t}$ have form
$$
\left(
         \begin{array}{cccc}
           p^{-k} M_1   &  \sigma(M_1) & \ldots &  \sigma^{U-1}(M_1) \\
           M_2   & p^{-k} \sigma(M_2) & \ldots &  \sigma^{U-1}(M_2) \\
           \vdots & \vdots                  &        & \vdots              \\
           M_U   &  \sigma(M_U) & \ldots & p^{-k} \sigma^{U-1}(M_U) \\
         \end{array}
       \right).
$$

We will skip the proof of the following proposition, stating that each  of the single user codes satisfies the NVD condition and are therefore DMT optimal.
\begin{proposition}
Let $C_{U,n_t} \in \mathfrak{C}_{U,n_t}$ and $\mathcal{C}_j$ be the $j$th users code in the system $C_{U,n_t}$ for some $j \in {1, \dots, U}$. Then the code $\mathcal{C}_j$ is a $2Un_t^2$-dimensional lattice code    with the NVD property.
\end{proposition}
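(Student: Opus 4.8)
The plan is to recognise $\mathcal{C}_j$ as a (scaled) multi--block space--time code built from the cyclic algebra $\D := (L/F,\tau,p)$ and to reduce the rank criterion to the elementary fact that nonzero elements of $\OO_K$ have absolute value at least $1$. For the lattice and dimension claim, note that a codeword $B_j$ is completely determined by the single block $M_j=M(x_{j,1},\dots,x_{j,n_t})$: this block occurs (up to the invertible scalar $p^{-k}$) among the $U$ blocks of $B_j$, and its first column is $(x_{j,1},\dots,x_{j,n_t})^{T}$. Hence $(x_{j,1},\dots,x_{j,n_t})\mapsto B_j$ is an injective $\OO_K$--linear map from $\OO_L^{\,n_t}$ into $\M_{n_t\times Un_t}(\C)$. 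Since $[L:\Q]=[L:K][K:\Q]=2Un_t$, the module $\OO_L^{\,n_t}$ is free over $\Z$ of rank $2Un_t^2$, and its image is discrete in $\M_{n_t\times Un_t}(\C)$ because every one of the $Un_t$ embeddings of $L$ over $K$ occurs among the entrywise automorphisms $\sigma^{i}\tau^{l}=\sigma^{\,i+lU}$ ($0\le i<U$, $0\le l<n_t$); so, up to the fixed scalars $p^{-k}$ and $p$, the map is a relative Minkowski embedding of $\OO_L^{\,n_t}$ over $\OO_K$ (the usual lattice property of multi--block constructions, cf.\ \cite{Lu}). Thus $\mathcal{C}_j$ is a $2Un_t^2$--dimensional lattice.

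For the NVD property, recall that the determinant criterion for a $U$--block codeword involves the product of the determinants of its $U$ constituent $n_t\times n_t$ blocks. Applying an automorphism entrywise commutes with taking determinants, and $\det M_j=\mathrm{Nrd}_{\D/F}(M_j)\in F$, so this product equals
$$
p^{-kn_t}\prod_{i=0}^{U-1}\det\bigl(\sigma^{i}(M_j)\bigr)\;=\;p^{-kn_t}\prod_{i=0}^{U-1}\sigma_F^{\,i}\bigl(\det M_j\bigr)\;=\;p^{-kn_t}\,N_{F/K}\bigl(\det M_j\bigr),
$$
where we used that $\sigma$ restricts to the generator $\sigma_F$ of $\Gal(F/K)$, which has order $U=[F:K]$. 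Now suppose $M_j\neq 0$, i.e.\ some $x_{j,l}\neq 0$. Dividing all the $x_{j,l}$ by the common power $p^{m}$, $m=\min_l v(x_{j,l})$, of the inert prime $p$ (which, since $p\OO_L$ is prime, only multiplies $\det M_j$ by $p^{mn_t}$), Lemma~\ref{determinanttivaluaatio} gives $\det M_j\neq 0$; in particular $(L/F,\tau,p)$ is a division algebra. Since the entries of $M_j$ lie in $\OO_L$, we have $\det M_j\in\OO_L\cap F=\OO_F$, hence $\beta:=N_{F/K}(\det M_j)\in\OO_K\setminus\{0\}$. Because $K$ is imaginary quadratic, complex conjugation is its nontrivial automorphism and $\abs{\beta}^{2}=\beta\overline{\beta}=N_{K/\Q}(\beta)$ is a nonzero rational integer, so $\abs{\beta}\ge 1$. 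Therefore the product of the block determinants of every nonzero codeword has absolute value at least $\abs{p}^{-kn_t}>0$, a constant depending only on the construction; by $\Z$--linearity of the code this is precisely the non--vanishing determinant property.

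The only delicate points are bookkeeping ones: one must check that the tower $K\subset F\subset L$ is arranged so that $\sigma|_F$ really generates $\Gal(F/K)$ (so that the product of the $U$ block determinants is the relative norm $N_{F/K}$), and that the reduction to the normalised case $\min_l v(x_{j,l})=0$ costs only a power of $p$ in the determinant, so that Lemma~\ref{determinanttivaluaatio} applies. Both follow immediately from the hypotheses collected in this section, so I do not foresee a serious obstacle.
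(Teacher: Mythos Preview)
The paper explicitly omits the proof of this proposition (``We will skip the proof of the following proposition''), so there is no argument in the paper to compare against. Your proposal is correct and supplies exactly the standard proof one would expect here: the single--user code is, up to the fixed scalar $p^{-k}$, the multi--block code attached to the natural order of the cyclic algebra $(L/F,\tau,p)$; since $\det M_j$ is the reduced norm and hence lies in $\OO_F$, the product of the $U$ block determinants equals $p^{-kn_t}N_{F/K}(\det M_j)\in p^{-kn_t}\OO_K$, and for $M_j\neq 0$ Lemma~\ref{determinanttivaluaatio} (after dividing out the common power of $p$) forces this norm to be nonzero, whence its absolute value is at least~$1$ because $K$ is imaginary quadratic. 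The dimension count via $\mathrm{rk}_{\Z}\,\OO_L^{\,n_t}=2Un_t^2$ and discreteness via the relative Minkowski embedding are likewise the standard ingredients; this is precisely the mechanism behind the multi--block constructions in~\cite{Lu} that the paper invokes, and the fact $\det M_j\in F$ is the single--block analogue of the paper's own Lemma~5.1. Your two ``bookkeeping'' caveats are indeed immediate from the set--up of Section~\ref{construction}, so the argument goes through without obstacle.
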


The code depends on how we did choose $L/K$, $p$, $\sigma$, and $k$, so to be precise, we can also refer to $C_{U,n_t}$ with $C_{U,n_t}(L/K,p,\sigma,k)$. Let us call the family of all such codes $C_{U,n_t}(L/K,p,\sigma,k)$ (\emph{i.e.} codes constructed with any suitable $L/K$, $p$, $\sigma$, and $k$) by $\mathfrak{C}_{U,n_t}$. That is
$$
\mathfrak{C}_{U,n_t} = \bigcup_{L/K,p,\sigma,k} \{ C_{U,n_t}(L/K,p,\sigma,k) \}
$$
where $L/K$, $p$, $\sigma$, and $k$ are any suitable ones.
\end{definition}

According to Proposition \ref{existence}  we can always find suitable $L/K$, $p$, $\sigma$, and $k$ for any $U \in \Z_+$ and $n_t \in \Z_+$. We therefore  have the following theorem.
\begin{thm}
For any choice of $U \in \Z_+$ and $n_t \in \Z_+$ we have $\mathfrak{C}_{U,n} \neq \emptyset$.
\end{thm}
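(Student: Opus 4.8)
The plan is to reduce the claim directly to Proposition \ref{existence} together with the trivial observation that the constraint on the parameter $k$ can always be met. First I would fix arbitrary $U\in\Z_+$ and $n_t\in\Z_+$ and apply Proposition \ref{existence} to obtain a suitable extension $L/K$ (with $K$ imaginary quadratic of class number $1$), a prime $p\in\OO_K$ inert in $L/K$, and a generator $\sigma$ of $\mathrm{Gal}(L/K)$. This already fixes the subfield $F$, the automorphism $\tau=\sigma^U$, and the valuation $v=v_p$, so all the ingredients appearing in the Definition of $C_{U,n_t}$ are in place except $k$.

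Next I would note that the set of rational integers $k$ with $k>\tfrac{U(n_t-1)}{2}$ is nonempty (e.g. $k=\lfloor U(n_t-1)/2\rfloor+1$ works), so we may pick any such $k$. The tuple $(L/K,p,\sigma,k)$ is then ``suitable'' in precisely the sense used in Section \ref{construction}, hence the code $C_{U,n_t}(L/K,p,\sigma,k)$ is defined. By construction it is one of the codes appearing in the union
$$
\mathfrak{C}_{U,n_t}=\bigcup_{L/K,p,\sigma,k}\{C_{U,n_t}(L/K,p,\sigma,k)\},
$$
so $\mathfrak{C}_{U,n_t}\neq\emptyset$. To be fully precise one should also remark that this object genuinely is a code in the sense of Section 2: each single-user matrix $M_j$ is a matrix representation of an element of $(L/F,\tau,p)$, which by Lemma \ref{determinanttivaluaatio} is a division algebra, and the preceding Proposition guarantees each $\mathcal{C}_j$ is a $2Un_t^2$-dimensional lattice code. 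Thus the entries of $\mathfrak{C}_{U,n_t}$ are legitimate $U$-user MIMO MAC codes, not merely formal symbols.

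The honest remark to make is that there is no real obstacle inside this theorem: its entire content has been outsourced to Proposition \ref{existence}, whose proof is omitted in the paper. The genuinely nontrivial step — the one I would expect to be the crux if it had to be proved here — is the simultaneous arithmetic demand of Proposition \ref{existence}: one must exhibit, for every admissible $(K,U,n_t)$, a degree-$Un_t$ cyclic extension $L/K$ that is \emph{also} of the form $K(\alpha)$ with $\alpha\in\R$ and that admits an inert prime $p\in\OO_K$. Granting that input, the present theorem is immediate by the packaging argument above.
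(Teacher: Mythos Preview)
Your proposal is correct and matches the paper's own treatment: the paper states this theorem as an immediate consequence of Proposition~\ref{existence} (plus the trivial choice of an admissible $k$), with no additional argument. Your extra remarks about the single-user codes being genuine lattice codes are harmless elaboration, but the essential content is identical.
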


Note that the code $C_{U,1}=C_{U,1}(L/K,p,\sigma,1) \in \mathfrak{C}_{U,1}$, a code for $U$ users each having one transmission antenna, consists of matrices of form

$$
\left(
         \begin{array}{ccccc}
           p^{-1}x_1   &         \sigma(x_1) &       \sigma^2(x_1) & \ldots &         \sigma^{U-1}(x_1) \\
                 x_2   & p^{-1}  \sigma(x_2) &       \sigma^2(x_2) & \ldots &         \sigma^{U-1}(x_2) \\
                 x_3   &         \sigma(x_3) & p^{-1}\sigma^2(x_3) & \ldots &         \sigma^{U-1}(x_3) \\
           \vdots      & \vdots              & \vdots              &        & \vdots                    \\
                 x_U   &         \sigma(x_U) &       \sigma^2(x_U) & \ldots & p^{-1}  \sigma^{U-1}(x_U) \\
         \end{array}
       \right).
$$

Note also that the code $C_{1,n_t}=C_{1,n_t}(L/K,p,\sigma,k) \in \mathfrak{C}_{1,n_t}$ is a usual single user  code multiplied by $p^{-k}$.

\begin{thm}
Let $C_{U,n_t} \in \mathfrak{C}_{U,n_t}$. The code $C_{U,n_t}$ is a full rate code and   satisfies the generalized rank criterion.
\end{thm}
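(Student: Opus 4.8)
\emph{Proof strategy.} The plan splits into the full-rate claim, which is a dimension count, and the generalized rank criterion, which I would handle by a $p$-adic valuation estimate. For full rate I would simply appeal to the preceding proposition: each single-user code $\mathcal{C}_j$ is a lattice of full rank $2Un_t^2$ inside the real $2Un_t^2$-dimensional space $\M_{n_t\times Un_t}(\C)$, so it carries the maximal number of real degrees of freedom available to $n_t$ antennas over $Un_t$ channel uses; hence every $\mathcal{C}_j$, and therefore $C_{U,n_t}$, is full rate. For the rank criterion I would fix a codeword $A\in C_{U,n_t}$ built from the matrices $M_j=M(x_{j,1},\dots,x_{j,n_t})$ with some $x_{j,l}\neq 0$ for each $j$, and aim to show $v(\det A)<\infty$ for $v=v_p$. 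The crucial preliminary fact is that $p$ inert in $L/K$ forces a unique prime of $L$ over $p$, so $v$ is $\Gal(L/K)$-invariant, i.e.\ $v\circ\sigma^{s}=v$ for all $s$, and $v(p)=1$.

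Next I would normalize each user: set $m_j=\min_l v(x_{j,l})<\infty$ and $x_{j,l}'=x_{j,l}/p^{m_j}\in\OO_L$, so that $M_j=p^{m_j}M_j'$ with $M_j'=M(x_{j,1}',\dots,x_{j,n_t}')$ meeting the hypotheses of Lemma \ref{determinanttivaluaatio}; hence $\det M_j'\neq 0$ and $0\le v(\det M_j')\le n_t-1$. Inspecting the definition of $M(\cdot)$, every entry of $M_j$ then has valuation $\ge m_j$, and $\ge m_j+1$ strictly above the diagonal because of the factor $p$ there. These two facts are the only structural inputs I would need about $M_j$.

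Now I would expand $\det A$ by the Leibniz formula over the $Un_t$ rows, organized into the $U$ row-blocks of size $n_t$. The permutations respecting this block partition contribute exactly $\prod_{i=1}^{U}\det A_{ii}$, where $A_{ii}=p^{-k}\sigma^{i-1}(M_i)$, and this product equals $p^{-kUn_t}\prod_i\sigma^{i-1}(\det M_i)$, of valuation
\[
v_0=-kUn_t+\sum_i v(\det M_i),\qquad -kUn_t+n_t\sum_i m_i\le v_0\le -kUn_t+n_t\sum_i m_i+U(n_t-1),
\]
a finite integer. For a permutation $\pi$ not respecting the partition, some row leaves its diagonal block, which forces at least two diagonal blocks to lose a column, so the number $D$ of rows that $\pi$ keeps inside their own column-block satisfies $D\le Un_t-2$; since a matched entry coming from block $i$ has valuation $\ge m_i$ in general, and $\ge m_i-k$ only when it sits inside the diagonal block, the corresponding Leibniz term has valuation at least $n_t\sum_i m_i-kD\ge -kUn_t+2k+n_t\sum_i m_i$. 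The hypothesis $k>\tfrac{U(n_t-1)}{2}$, i.e.\ $2k>U(n_t-1)$, is exactly what makes this lower bound strictly exceed $v_0$; by the ultrametric inequality the sum of all non-block-preserving terms then also has valuation $>v_0$, so $v(\det A)=v_0<\infty$, hence $\det A\neq 0$, which is the generalized rank criterion.

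The conceptual point worth highlighting is that passing to $v_p$ turns the problem into a competition between the block-diagonal term and everything else, and that $k>U(n_t-1)/2$ is precisely the threshold — dictated by the bound $v(\det M_j')\le n_t-1$ of Lemma \ref{determinanttivaluaatio} — at which the diagonal term dominates. The remaining work is routine bookkeeping: verifying that the block-preserving part of the Leibniz sum really collapses to $\prod_i\det A_{ii}$ with the correct sign, the combinatorial estimate $D\le Un_t-2$, and the applicability of Lemma \ref{determinanttivaluaatio} to each normalized $M_j'$. I expect the $D\le Un_t-2$ count and the sign bookkeeping to be the places one must be careful, but none of this should present a genuine obstacle.
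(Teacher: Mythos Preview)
Your proposal is correct and follows essentially the same $p$-adic valuation argument as the paper: isolate the block-diagonal contribution $p^{-kUn_t}\prod_i\sigma^{i-1}(\det M_i)$, bound its valuation above via Lemma~\ref{determinanttivaluaatio}, bound the remainder below by $-k(Un_t-2)$, and invoke $k>U(n_t-1)/2$ to force a strict gap. The only differences are expository---you carry the normalizations $m_j$ through rather than dividing them out at the start, and you make explicit the combinatorial reason (your $D\le Un_t-2$ count) behind the paper's bare assertion that $v(y)\ge -k(Un_t-2)$.
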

\begin{proof}
Let $A \in C_{U,n_t}=C_{U,n_t}(L/K,p,\sigma,k)$. We may assume that $\min(v(x_{j,1}), \dots, v(x_{j,n_t}))=0$, for all $j=1, \dots, U$, because otherwise we can divide extra $p$'s off. That does not have any impact on whether $\det(A)=0$ or not. The determinant of $A$ is
$$
p^{-kUn_t} \prod_{l=1}^{U} \det(\sigma^{l-1}(M_l)) + y
$$
where $v(y) \geq -k(Un_t-2)$. We know that $v(\sigma^{l-1}(\det(M_l)))=v(\det(M_l))$ because $p$ is from $K$, \emph{i.e.} from the fixed field of $\sigma$, and $\det(M_l) \neq 0$ for all $l$. Therefore
$$
v(p^{-kUn_t} \prod_{l=1}^{U} \det(\sigma^{l-1}(M_l))) = -kUn + \sum_{l=1}^{U} v(\det(M_l))
$$
that is less or equal than $-kUn_t + U(n_t-1) = U(n_t-1-kn_t)$ by \ref{determinanttivaluaatio}. But if we would have $\det(A)=0$ then
$$
v(y) = v(p^{-kUn_t} \prod_{l=1}^{U} \det(\sigma^{l-1}(M_l)))
$$
and hence $v(y) \leq U(n_t-1-kn_t)$ implying $-k(Un_t-2) \leq U(n_t-1-kn_t)$. This gives $2k \leq U(n_t-1)$ \emph{i.e.} $k \leq \frac{U(n_t-1)}{2}$ a contradiction.
\end{proof}

\begin{remark}
Using  multiblock codes from division algebras as single user codes in the MIMO MAC scenario has been used before for example in \cite{bb}, \cite{matriisi} and \cite{LuHoVeLa}. In \cite{bb} the full rank condition for codes with $n_t>1$ is achieved by using transcendental elements.
In  \cite{matriisi} the same effect is achieved with algebraic elements of high degree.

\end{remark}
\section{On the decay function of codes in $\mathfrak{C}_{U,n_t}$}\label{decayanalysis}
In this chapter we will prove an asymptotic lower bound for the decay function of codes from $\mathfrak{C}_{U,n_t}$. In \cite{decaypaperi} the authors give a general asymptotic upper bound for a decay function in the case that only one user is properly using the code \emph{i.e.} $N_1$ can be anything but $N_2= \dots =N_U=1$ are restricted. We will see that in this special case our codes have asymptotically the best possible decay.

\begin{lemma}
Let $C_{U,n_t}=C_{U,n_t}(L/K,p,\sigma,k) \in \mathfrak{C}_{U,n_t}$, $A \in C_{U,n_t}$, and let $F$ be the fixed field of $\tau = \sigma^{U}$. Then $\det(A) \in F$.
\end{lemma}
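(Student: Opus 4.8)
The plan is to show that the determinant of a codeword matrix $A$ is fixed by $\tau = \sigma^U$, since $F$ is by definition the fixed field of $\tau$ (and $\det(A) \in L$ is clear because all entries of $A$ lie in $L$, with the scalars $p^{\pm k} \in \OO_K \subseteq L$). So the whole task reduces to checking that $\tau(\det(A)) = \det(A)$.

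First I would recall the block structure: $A$ is a $U \times U$ array of $n_t \times n_t$ blocks, whose $(i,m)$ block is $p^{-k\delta_{im}}\sigma^{m-1}(M_i)$, where $M_i = M(x_{i,1},\dots,x_{i,n_t})$ is a matrix representation of an element of the cyclic algebra $(L/F,\tau,p)$. The key algebraic fact I would invoke is the standard one about such matrix representations: for any element of $(L/F,\tau,p)$ with representing matrix $M$, the \emph{reduced norm} $\det(M)$ lies in the center $F$; equivalently, $\tau$ fixes $\det(M(x_1,\dots,x_{n_t}))$ for every choice of $x_i \in \OO_L$. This is a consequence of the defining relations of the cyclic algebra and can also be seen directly from the cofactor expansion of the explicit matrix, where applying $\tau$ permutes the rows/columns in a way that leaves the determinant invariant because $\tau^{n_t}=\mathrm{id}$ and $p \in K$ is $\tau$-fixed. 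Hence $\det(M_i) \in F$ for each $i$.

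Next I would apply $\tau$ to $\det(A)$. Since $\tau$ is a ring automorphism of $L$ fixing $\Q$ (hence fixing the rational integer powers of $p$ and permuting nothing among the scalar factors), $\tau(\det(A))$ is the determinant of the matrix obtained by applying $\tau$ entrywise. The entries are of the form $p^{-k\delta_{im}}\sigma^{m-1}(x_{i,l})$ and its $\tau$-conjugates inside the blocks; applying $\tau = \sigma^U$ and using that $\mathrm{Gal}(L/K)$ is cyclic generated by $\sigma$, we get $\tau \sigma^{m-1} = \sigma^{m-1}\tau$, so $\tau$ acts on the $m$th block column as $\sigma^{m-1}$ composed with $\tau$ acting on the $M_i$'s. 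Therefore the $\tau$-image of $A$ is the matrix with blocks $p^{-k\delta_{im}}\sigma^{m-1}(\tau(M_i))$. Now I would use the multilinearity/block-expansion of the determinant together with the fact from the previous paragraph: each $\det(M_i)$ is $\tau$-fixed, and more relevantly the whole determinant $\det(A)$, when expanded, is a polynomial in the quantities $\det$ of products of the $\sigma^{m-1}(M_i)$-blocks whose ``$F$-valued'' building blocks ($\det M_i$, and the relevant cofactor combinations coming from the cyclic-algebra structure) are $\tau$-invariant; the outer $\sigma^{m-1}$ commutes with $\tau$. Hence $\tau(\det(A)) = \det(A)$.

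The main obstacle is making the last step clean: the determinant of $A$ is not simply a product of the $\det(\sigma^{m-1}(M_i))$ (that is only the leading term in the valuation estimate used earlier), so one cannot just say ``$\tau$ fixes each factor.'' The honest way is to go back to the cyclic-algebra picture: $A$ itself is (up to the scalar twists $p^{\pm k}$) a matrix representation, over the basis giving $L$-linear maps, of an element of a \emph{second} cyclic construction $(L/F', \sigma_F, \cdot)$-type object layered on top; its determinant is again a reduced norm down to the appropriate fixed field, which is contained in $F$. I would phrase it as: the map $x_{j,l} \mapsto A$ realizes $A$ as the matrix of left multiplication (in a suitable basis) by an element of a crossed-product algebra with center contained in $F$, so $\det(A)$, being a reduced norm, lies in that center and a fortiori in $F$. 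Carrying out this identification of centers carefully — i.e. checking that the iterated cyclic construction has center exactly $F$ (or a subfield thereof) and that the $p^{-k}$ twists do not leave $F$ — is the step that needs the most care, but it is routine given the setup already fixed in Section \ref{construction}.
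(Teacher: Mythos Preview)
Your overall strategy --- show that $\tau(\det A)=\det A$ by applying $\tau$ entrywise to $A$ --- is exactly the paper's strategy, and the reduction $\tau\!\circ\!\sigma^{m-1}=\sigma^{m-1}\!\circ\!\tau$ giving block $(i,m)$ of $\tau(A)$ equal to $p^{-k\delta_{im}}\sigma^{m-1}(\tau(M_i))$ is correct. You also correctly flag the obstacle: $\det A$ is \emph{not} a product of the $\det(\sigma^{m-1}(M_i))$, so knowing $\det(M_i)\in F$ is not enough.

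The gap is in your proposed fix. You assert that $A$ ``realizes \dots\ left multiplication by an element of a crossed-product algebra with center contained in $F$'', so that $\det A$ is a reduced norm. This identification is never carried out, and in fact the block pattern of $A$ --- row $i$ consisting of $\sigma^{m-1}(M_i)$ for $m=1,\dots,U$, with a $p^{-k}$ on the diagonal block only --- is not the shape of the regular representation of any cyclic or crossed-product algebra over $F$ (in such a representation the blocks along a row would be shifted copies $M_i, M_{i+1},\dots$ with a non-scalar $\gamma$ appearing above or below the diagonal, not $\sigma$-conjugates of the same $M_i$). So the ``routine'' step you defer is in fact the whole content, and as stated it does not go through.

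There is, however, a clean way to finish in the spirit of your cyclic-algebra remark about $M_i$. Let $Q\in M_{n_t}(\OO_K)$ be the matrix of left multiplication by $u$ in $(L/F,\tau,p)$ (the companion-type matrix with $1$'s on the subdiagonal and $p$ in the top-right corner). Then conjugation by $u$ in the algebra applies $\tau$ to the $L$-coefficients, which on matrices reads $\tau(M_i)=Q\,M_i\,Q^{-1}$. Since $Q$ has entries in $K$, every $\sigma^{m-1}$ fixes $Q$, so the $(i,m)$ block of $\tau(A)$ is $Q\bigl(p^{-k\delta_{im}}\sigma^{m-1}(M_i)\bigr)Q^{-1}$, i.e.\ $\tau(A)=(I_U\otimes Q)\,A\,(I_U\otimes Q)^{-1}$, whence $\tau(\det A)=\det A$. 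The paper's proof is precisely the hands-on version of this conjugation: it writes out $\tau(M_j)$, performs the row/column swap that corresponds to part of the conjugation by $Q$, obtains a matrix $M_j'$ differing from $M_j$ only in which off-diagonal entries carry the factor $p$, and then argues combinatorially that in the Leibniz expansion of $\det A$ each term picks up the same power of $p$ from the $M_j'$ as from the $M_j$.
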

\begin{proof}
As $F$ is the fixed field of $\tau$ it is enough to prove that $\tau(\det(A))=\det(A)$. Write again $M_j=M(x_{j,1}, x_{j,2}, \dots, x_{j,n_t})$ and notice that $\tau(M_j)$, after switching first and last column and first and last row, is
$$
\left(
         \begin{array}{ccccc}
           x_{j,1}   &      \tau(x_{j,n_t}) &  \tau^2(x_{j,n_t-1}) & \ldots &  \tau^{n_t-1}(x_{j,2}) \\
           p x_{j,2}   &       \tau(x_{j,1}) & p \tau^2(x_{j,n_t}) & \ldots & p \tau^{n_t-1}(x_{j,3}) \\
           p x_{j,3}   &       \tau(x_{j,2}) &   \tau^2(x_{j,1}) & \ldots & p \tau^{n_t-1}(x_{j,4}) \\
           \vdots & \vdots         & \vdots          &        & \vdots              \\
           p x_{j,n_t}   &       \tau(x_{j,n_t-1}) &   \tau^2(x_{j,n_t-2}) & \ldots &   \tau^{n_t-1}(x_{j,1}) \\
         \end{array}
       \right).
$$
Here we did an even number of row and column changes so when calling the above matrix $M_{j}'$ we see that $\tau(\det(A))$ is
$$
\left|
         \begin{array}{ccccc}
           p^{-k} M'_1   &  \sigma(M'_1) &  \sigma^2(M'_1) & \ldots &  \sigma^{n-1}(M'_1) \\
           M'_2   & p^{-k} \sigma(M'_2) &  \sigma^2(M'_2) & \ldots &  \sigma^{n-1}(M'_2) \\
           M'_3   &  \sigma(M'_3) & p^{-k} \sigma^2(M'_3) & \ldots &  \sigma^{n-1}(M'_3) \\
           \vdots & \vdots        & \vdots          &        & \vdots              \\
           M'_U   &  \sigma(M'_U) &  \sigma^2(M'_U) & \ldots & p^{-k} \sigma^{n-1}(M'_U) \\
         \end{array}
       \right|.
$$
Let us call the above matrix $A'$. We notice that matrices $A$ and $A'$ are exactly similar apart from the fact that in $M_j$ elements in places $(1,2),(1,3),\dots,(1,n_t)$ have coefficient $p^{-k}$ and elements in places $(2,1),(3,1),\dots,(n_t,1)$ have not coefficient $p^{-k}$ and in $M_{j}'$ these are vice versa. But this does not change the value  of the determinant because in the expansion of the determinant each summand include a product of  same number of  elements from columns that are congruent to $1$ modulo $n_t$ and from rows that are congruent to $1$ modulo $n_t$.
\end{proof}

\begin{thm}\cite{remarks} \label{pigeonmultiantenna}
For any full-rate $U$-user lattice code, each user transmitting with $n_t$ antennas there exists a constant $k>0$ such that
$$
D(N_1=N, N_2=N_3= \dots =N_U=1) \leq \frac{k}{N^{(U-1)n_t}}.
$$
\end{thm}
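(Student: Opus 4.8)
The plan is to fix the codewords of the other users, to linearise the determinant through a Schur-complement identity, and then to produce a short codeword of user~$1$ with small determinant by Minkowski's convex body theorem. Since $N_2=\dots=N_U=1$, the tuple $(X_2,\dots,X_U)$ ranges over a finite set whose size does not depend on $N$, so if we fix one admissible choice $(X_2^0,\dots,X_U^0)$ with every $X_j^0\neq0$ (say $X_j^0=B_{j,1}$) then
$$
D(N,1,\dots,1)\ \le\ \min_{X_1\in\textbf{L}_1(N)\setminus\{0\}}\bigl|\det M(X_1,X_2^0,\dots,X_U^0)\bigr|,
$$
and it is enough to bound the right-hand side. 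Write $M(X_1)=M(X_1,X_2^0,\dots,X_U^0)$; its top $n_t$ rows are $X_1$ and its bottom $(U-1)n_t$ rows form a fixed matrix $W$. If $W$ is rank-deficient, or if $\det M(X_1)=0$ for some nonzero $X_1\in\textbf{L}_1$, then by monotonicity of $D$ in $N$ we get $D(N,1,\dots,1)=0$ for all large $N$, and the finitely many remaining values of $N$ only enlarge the constant $k$; so we may assume that $W$ has full row rank and that $\det M(X_1)\neq0$ for every nonzero $X_1\in\textbf{L}_1$.

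For the linearisation, let $\Pi_W$ be an $n_t\times Un_t$ matrix whose rows form an orthonormal basis of the orthogonal complement of the row space of $W$. Applying the Schur complement to the Gram matrix $M(X_1)M(X_1)^{*}$ gives
$$
\bigl|\det M(X_1)\bigr|^{2}=\det(WW^{*})\,\bigl|\det(X_1\Pi_W^{*})\bigr|^{2},
$$
so with $c_W=\sqrt{\det(WW^{*})}>0$ and the $\C$-linear map $T(X_1)=X_1\Pi_W^{*}$ we obtain $|\det M(X_1)|=c_W\,|\det T(X_1)|$. Here $T$ is a surjective linear map from the $r=2Un_t^{2}$-dimensional real space containing $\textbf{L}_1$ onto $M_{n_t}(\C)\cong\R^{2n_t^{2}}$, so $\dim_\R\ker T=2n_t^{2}(U-1)$; and, by the previous paragraph, $T(X_1)$ is invertible for every nonzero $X_1\in\textbf{L}_1$, so any nonzero lattice point produced below will automatically have nonzero determinant.

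Now we invoke the geometry of numbers. For $\delta>0$ let
$$
\mathcal B_{N,\delta}=\{\,X_1:\ \|T(X_1)\|\le\delta,\ \|X_1\|\le N\,\},
$$
with the operator norm on $M_{n_t}(\C)$ and the Euclidean norm on the space of the $X_1$. Splitting that space as $\ker T\oplus(\ker T)^{\perp}$ and using that $T$ restricts to a linear isomorphism on $(\ker T)^{\perp}$, the condition $\|T(X_1)\|\le\delta$ confines the $(\ker T)^{\perp}$-component of $X_1$ to a fixed ellipsoid of volume $\asymp\delta^{2n_t^{2}}$ and diameter $\lesssim\delta$; hence, for $\delta\le N/2$,
$$
\operatorname{vol}(\mathcal B_{N,\delta})\ \gtrsim\ \delta^{2n_t^{2}}\,N^{\,2n_t^{2}(U-1)}.
$$
Taking $\delta=C_0N^{-(U-1)}$ turns this lower bound into a fixed multiple of $C_0^{2n_t^{2}}$, so for $C_0$ large enough (depending only on $T$ and the covolume of $\textbf{L}_1$, not on $N$) Minkowski's theorem yields a nonzero $X_1\in\textbf{L}_1$ with $\|X_1\|\le N$ and $\|T(X_1)\|\le C_0N^{-(U-1)}$. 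Then $|\det T(X_1)|\le\|T(X_1)\|^{n_t}\le C_0^{\,n_t}N^{-(U-1)n_t}$, and since $X_1\neq0$ we have $\det M(X_1)\neq0$ together with $|\det M(X_1)|=c_W|\det T(X_1)|\le c_WC_0^{\,n_t}N^{-(U-1)n_t}$. Converting $\|X_1\|\le N$ into a bound on the lattice coordinates of $X_1$ costs only a fixed constant factor, so after rescaling $N$ (and absorbing the finitely many small $N$ into the constant) we obtain $D(N,1,\dots,1)\le k\,N^{-(U-1)n_t}$.

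The step I expect to be the main obstacle is not any single estimate but the requirement to get a \emph{nonzero} determinant together with a \emph{short} codeword simultaneously: the identity $|\det M(X_1)|=c_W|\det T(X_1)|$ and the rank-criterion dichotomy in the first step take care of the nonvanishing, while Minkowski applied to the anisotropic slab $\mathcal B_{N,\delta}$ takes care of the trade-off between the size of $X_1$ and the size of $T(X_1)$; the one point needing genuine care is the lower bound for $\operatorname{vol}(\mathcal B_{N,\delta})$, i.e. checking that $\mathcal B_{N,\delta}$ really is a non-degenerate slab of the stated volume and that the constant in Minkowski's theorem is independent of $N$.
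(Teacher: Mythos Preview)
The paper does not prove this theorem: it is stated with the citation \cite{remarks} and no proof is given in the present paper, so there is nothing to compare your argument against here. That said, your argument is correct and is essentially the kind of pigeonhole/Minkowski argument that the name of the bound suggests and that the cited reference uses.

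A few minor remarks on presentation. The Schur-complement identity is right: with $P=I-W^{*}(WW^{*})^{-1}W=\Pi_W^{*}\Pi_W$ one gets $\det(MM^{*})=\det(WW^{*})\det(X_1PX_1^{*})=\det(WW^{*})|\det(X_1\Pi_W^{*})|^{2}$, exactly as you claim. Your handling of the degenerate cases is fine; note that you do not actually need the observation that $T(X_1)$ is invertible for nonzero $X_1$, since a zero determinant only helps the upper bound. For the volume estimate you should also record that the auxiliary condition $\delta\le N/2$ (needed so that the slab really has the product shape you use) is satisfied once $N$ is large, because $\delta=C_0N^{-(U-1)}$ with $U\ge2$; the finitely many small $N$ are again absorbed into the constant. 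Finally, the conversion between $\|X_1\|\le N$ and the coordinate box $|b_i|\le N$ does cost a constant depending on the chosen basis of $\textbf{L}_1$, so strictly speaking the Minkowski step produces a point in $\textbf{L}_1(\lceil cN\rceil)$; since $D$ is nonincreasing in $N$ this is harmless, as you say.
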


For the next theorem we need few definitions. Let $p(x)=p_0+p_1x+\dots+p_m x^m \in \Z[x]$ be a polynomial. Then we say that $H(p(x))=\max\{|p_j|\}$ is the height of the polynomial $p(x)$ and for an algebraic number $\alpha$ we define $H(\alpha)=H(\phi_{\alpha})$ where $\phi_{\alpha}$ is the minimal polynomial of $\alpha$.
The next generalization of Liouville's theorem can be found from \cite[p. 31]{shidlovskii}.
\begin{thm}
\label{algebraicapprox}
Let $\alpha \in \R$ be an algebraic number of degree $\kappa$, $H(\alpha) \leq h$, $H(P) \leq H$ and $\deg(P(x))=m \in \Z^{+}$. Then either $P(\alpha)=0$ or
$$
|P(\alpha)| \geq \frac{c^{m}}{H^{\kappa-1}}
$$
with $c=\frac{1}{3^{\kappa-1}h^{\kappa}}$.
\end{thm}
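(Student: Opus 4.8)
The plan is to run the classical Liouville--resultant argument. Write $\phi_\alpha(x)=a_\kappa x^\kappa+a_{\kappa-1}x^{\kappa-1}+\dots+a_0\in\Z[x]$ for the primitive minimal polynomial of $\alpha$ (so $a_\kappa\geq 1$ and $\max_j|a_j|=H(\alpha)\leq h$), and let $\alpha=\alpha_1,\alpha_2,\dots,\alpha_\kappa$ be the conjugates of $\alpha$. The first step is the observation that, since $\phi_\alpha$ is irreducible over $\Q$ and $P(\alpha)\neq 0$, the polynomial $\phi_\alpha$ does not divide $P$ and hence $\phi_\alpha$ and $P$ share no common root. Consequently the resultant $R=\mathrm{Res}(\phi_\alpha,P)=a_\kappa^{m}\prod_{i=1}^{\kappa}P(\alpha_i)$ is a \emph{nonzero} integer --- it is the determinant of the Sylvester matrix of $\phi_\alpha$ and $P$, whose entries are integers --- so $|R|\geq 1$. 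Pulling the factor $P(\alpha_1)=P(\alpha)$ out of the product gives
$$
|P(\alpha)|\ \geq\ \frac{1}{a_\kappa^{m}\prod_{i=2}^{\kappa}|P(\alpha_i)|}.
$$

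It then remains to bound the denominator from above. The leading coefficient contributes $a_\kappa^{m}\leq h^{m}$. For the conjugates, the Cauchy root bound applied to $\phi_\alpha$ (using $a_\kappa\geq 1$ and $\max_j|a_j|\leq h$) gives $|\alpha_i|\leq 1+h$ for every $i$. For each conjugate,
$$
|P(\alpha_i)|\ \leq\ H\sum_{j=0}^{m}|\alpha_i|^{j}\ \leq\ H(1+|\alpha_i|)^{m}\ \leq\ H(2+h)^{m}\ \leq\ (3h)^{m}H ,
$$
where the second inequality is $\sum_{j=0}^{m}x^{j}\leq\sum_{j=0}^{m}\binom{m}{j}x^{j}=(1+x)^{m}$ for $x\geq 0$, and the last inequality uses $h\geq 1$ (recall $h\geq H(\alpha)\geq 1$). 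Multiplying the $\kappa-1$ conjugate factors, the denominator is at most $h^{m}\big((3h)^{m}H\big)^{\kappa-1}=3^{m(\kappa-1)}h^{m\kappa}H^{\kappa-1}$, so
$$
|P(\alpha)|\ \geq\ \frac{1}{3^{m(\kappa-1)}h^{m\kappa}H^{\kappa-1}}\ =\ \frac{c^{m}}{H^{\kappa-1}},\qquad c=\frac{1}{3^{\kappa-1}h^{\kappa}},
$$
which is exactly the asserted inequality.

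I expect the only point genuinely needing care to be the claim that $R$ is a nonzero integer: this rests on the irreducibility of $\phi_\alpha$ (so that a common root would force $\phi_\alpha\mid P$, contradicting $P(\alpha)\neq 0$) together with the two standard facts that the resultant of integer polynomials is an integer and that $\mathrm{Res}(\phi_\alpha,P)=a_\kappa^{m}\prod_i P(\alpha_i)$. Everything afterwards is elementary estimation; the only mild subtlety there is keeping the bound on $|P(\alpha_i)|$ tight enough to reach the constant $3^{\kappa-1}$ and not something larger, for which the binomial inequality $(1+x)^{m}\geq 1+x+\dots+x^{m}$ is the right tool and works uniformly in $m$, including the borderline case $m=1$. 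Since this is a textbook statement, in the paper one simply cites \cite[p.~31]{shidlovskii}, but the above is the argument underlying it.
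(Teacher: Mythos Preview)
Your argument is correct and is precisely the classical Liouville--resultant proof underlying this statement. The paper itself does not prove this theorem; it merely quotes it from Shidlovskii \cite[p.~31]{shidlovskii}, so there is no original proof to compare against---your write-up is essentially what one finds in that reference, and all the estimates (the Cauchy root bound $|\alpha_i|\le 1+h$, the binomial inequality $\sum_{j=0}^m x^j\le(1+x)^m$, and the use of $h\ge 1$ to pass from $2+h$ to $3h$) are handled cleanly and land on the stated constant.
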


Now we are ready to give a  lower bound for the decay function of our codes. The  proof can be seen as an extension to the analysis given for BB-code in \cite{decaypaperi}.
\begin{thm}
\label{inertdecay3}
For a code $C_{U,n_t} \in \mathfrak{C}_{U,n_t}$ there exists constant $K>0$ such that
$$
D(N_1, N_2, \ldots , N_U) \geq \frac{K}{(N_1 N_2 \ldots N_U)^{(U-1)n_t}}.
$$
Especially
$$
D(N) \geq \frac{K}{N^{U(U-1)n_t}}.
$$
\end{thm}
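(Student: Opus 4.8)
The goal is to bound $|\det(A)|$ from below for an arbitrary nonzero codeword $A \in C_{U,n_t}$, where each user's entries $x_{j,l}$ are $\OO_L$-combinations of basis elements with integer coefficients bounded by $N_j$. The previous lemma tells us $\det(A) \in F$, and the generalized rank criterion theorem tells us $\det(A) \neq 0$; moreover the $p$-adic analysis there shows $v(\det(A)) = -kUn_t + \sum_l v(\det(M_l)) \le U(n_t - 1 - kn_t)$, so $\det(A)$ is a nonzero element of $F$ with a controlled (negative) $p$-valuation. The strategy, following \cite{decaypaperi}, is to clear denominators: write $\det(A) = p^{-kUn_t} P$ where $P \in \OO_F$ is (up to the fixed denominator $p^{kUn_t}$) an algebraic integer, and then realize $P$ as the value $\phi(\alpha)$ of an integer polynomial $\phi$ evaluated at the generator $\alpha$ of $L = K(\alpha)$ (recall $\alpha \in \R$), so that Theorem \ref{algebraicapprox} applies.

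First I would expand $\det(A)$ explicitly as a $\Z[i]$- (or $\OO_K$-) polynomial in $\alpha$ and its conjugates $\sigma^t(\alpha)$; since all $\sigma^t(\alpha)$ are themselves roots of the degree-$Un_t$ minimal polynomial of $\alpha$ over $K$, symmetric-function / resultant bookkeeping lets us rewrite the whole determinant as $p^{-kUn_t}\phi(\alpha)$ for a single polynomial $\phi \in \OO_K[x]$ whose degree is bounded by a constant depending only on the construction (the matrix is $Un_t \times Un_t$, entries are $K$-linear combinations of $n_t$ conjugates each of the $x_{j,l}$, which are themselves $\OO_K$-combinations of a fixed integral basis of $\OO_L$). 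The key accounting step is the \emph{height} of $\phi$: each $x_{j,l}$ has entries with $\OO_K$-coordinates of size $O(N_j)$, and $\det(A)$ is a sum of $Un_t$-fold products of such entries (times the fixed constants $a_{ij},\, p,\, p^{-k}$), so $H(\phi) = O\big(\prod_{j=1}^U N_j^{n_t}\big)$ — each user contributes its $n_t$ rows, hence exponent $n_t$, to each monomial. Here I would be a little careful: the naive product of all $Un_t$ entries would give exponent $\prod_j N_j^{n_t}$ only if each factor in a monomial comes from a distinct user block; the block structure of $A$ (each row-block $B_j$ involves only user $j$'s data) guarantees exactly this, so $H(\phi) \le c_1 (N_1 \cdots N_U)^{n_t}$.

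Next, apply Theorem \ref{algebraicapprox} with $\alpha$ of degree $\kappa = Un_t$ over $\Q$ (or the appropriate degree — I would double-check whether one works over $\Q$ or $K$; since the statement is about a real algebraic number one uses the degree over $\Q$, say $\kappa \le 2Un_t$, still a constant), $h = H(\alpha)$ a fixed constant, $H = H(\phi) \le c_1(N_1\cdots N_U)^{n_t}$, and $m = \deg\phi$ bounded by a constant. This yields $|\phi(\alpha)| \ge c^m / H^{\kappa-1} \ge c_2 / (N_1 \cdots N_U)^{n_t(\kappa - 1)}$. Then $|\det(A)| = |p|^{-kUn_t} |\phi(\alpha)| \ge c_3 / (N_1\cdots N_U)^{n_t(\kappa-1)}$. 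To land the exact exponent $(U-1)n_t$ claimed, I would need $\kappa - 1 = U - 1$, i.e. the relevant degree is exactly $U$, not $Un_t$ — this is the point I expect to be the main obstacle, and it must be the reason $\det(A) \in F$ (with $[F:K] = U$) was proved as a separate lemma: one should not treat $\det(A)$ as living in $L$ but rather note $\det(A) \in F$, write $F = K(\beta)$ for a real generator $\beta$ of degree $U$ over $K$ (degree $2U$ over $\Q$), express $p^{kUn_t}\det(A) = \psi(\beta)$ for an integer polynomial $\psi$ of bounded degree, re-examine the height of $\psi$ (still $O((N_1\cdots N_U)^{n_t})$ after re-expanding in the $\OO_F$-basis, since the $\OO_L$-coordinates feeding in scale the same way), and apply Theorem \ref{algebraicapprox} with $\kappa = 2U$. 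Getting $H(\psi) = O((N_1\cdots N_U)^{n_t})$ after the change of basis from $L$-coordinates to $F$-coordinates is the delicate bookkeeping I would slow down on. That gives $|\det(A)| \ge c_4/(N_1\cdots N_U)^{n_t(2U-1)}$ — so I may in fact only get exponent $n_t(2U-1)$ directly; to reach $(U-1)n_t$ one presumably exploits that $\det(A)$, being a norm-like expression, is a value of a polynomial of degree only $U-1$ in $\beta$ after using that the leading $p^{-kUn_t}\prod \det(\sigma^{l-1}M_l)$ term and the Galois structure collapse the effective degree; I would verify this reduction carefully. The final inequality $D(N) \ge K/N^{U(U-1)n_t}$ is then immediate by setting $N_1 = \cdots = N_U = N$.
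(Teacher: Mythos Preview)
Your overall strategy matches the paper's: expand $\det(A)$, track that each monomial picks exactly $n_t$ entries from each user's block so the height is $O((N_1\cdots N_U)^{n_t})$, use the lemma $\det(A)\in F$ to work in the smaller field, and apply Theorem~\ref{algebraicapprox}. The height bookkeeping is fine.

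The gap is exactly where you flag uncertainty, and your speculation about ``collapsing the effective polynomial degree via norm structure'' is not the fix. Two points. First, your degree count is wrong: if $\delta$ is a \emph{real} generator with $F=K(\delta)$, then since $K$ is imaginary quadratic and $\Q(\delta)$ is real, $\Q(\delta)\cap K=\Q$, so $[\Q(\delta):\Q]=[K(\delta):K]=U$, not $2U$. (Such a real $\delta$ exists because $L=K(\alpha)$ with $\alpha$ real forces $\Q(\alpha)/\Q$ cyclic of degree $Un_t$, and $F=K\cdot F_0$ for the degree-$U$ real subfield $F_0=\Q(\delta)$.) Second, even with the correct degree, you cannot apply Theorem~\ref{algebraicapprox} directly to $\psi(\delta)=p^{kUn_t}\det(A)$, because $\psi$ has coefficients in $\OO_K$, not $\Z$, while the theorem requires an integer polynomial. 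The missing trick, which the paper uses, is to split off real and imaginary parts: with $K=\Q(\sqrt{-w})$ one writes
\[
\det(A)=K_2\Bigl(\sum_{l=0}^{U-1}H_l\delta^l+\sqrt{-w}\sum_{l=0}^{U-1}J_l\delta^l\Bigr),\qquad H_l,J_l\in\Z,\ |H_l|,|J_l|=O\bigl((N_1\cdots N_U)^{n_t}\bigr).
\]
Since $\delta\in\R$ and $\sqrt{-w}$ is purely imaginary, $|\det(A)|\ge c\max\bigl(|\sum H_l\delta^l|,|\sum J_l\delta^l|\bigr)$. At least one of these two \emph{integer}-coefficient polynomials is nonzero (as $\det(A)\neq0$), and now Theorem~\ref{algebraicapprox} applies with $\kappa=\deg_\Q(\delta)=U$, giving precisely the exponent $(U-1)n_t$.
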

\begin{proof}
Let $C_{U,n_t}=C_{U,n_t}(L/K,p,\sigma,k)$. Field extension $L/\Q$ has a basis $S_1 \cup S_2$ where $S_1=\{1, \delta, \delta^{2}, \ldots, \delta^{U-1}, \beta, \beta \delta, \beta \delta^{2}, \ldots, \beta \delta^{U-1}\}$ is a basis of $F/\Q$ with $\delta \in \R$, $K=\Q(\beta)$ and $\beta=\sqrt{-w}$ for some positive integer $w$. Notice that if $L=F$ then $S_2 = \emptyset$.

 The ring $\OO_L$ has a $\Z$-basis $\{ \gamma_{1}, \ldots, \gamma_{2Un_t} \}$. Each of these basis elements can be presented as
$$
\gamma_{l}=\sum_{a \in S_1 \cup S_2} s_{l,a} a,
$$
where $s_{l,a} \in \Q$ for all $l = 1, \dots, 2Un_t$ and $a \in S_1 \cup S_2$.

Let $A \in C_{U,n}$ be
$$
\left(
         \begin{array}{ccccc}
           p^{-k} M_1   &  \sigma(M_1) & \ldots &  \sigma^{U-1}(M_1) \\
           M_2   & p^{-k} \sigma(M_2) & \ldots &  \sigma^{U-1}(M_2) \\
           \vdots & \vdots          &        & \vdots              \\
           M_U   &  \sigma(M_U) & \ldots & p^{-k} \sigma^{U-1}(M_U) \\
         \end{array}
       \right)
$$
and $M_j=M(x_{j,1}, x_{j,2}, \dots, x_{j,n_t})$ be
$$
\left(
         \begin{array}{ccccc}
           x_{j,1}   &     p \tau(x_{j,n_t}) & p \tau^2(x_{j,n_t-1}) & \ldots & p \tau^{n_t-1}(x_{j,2}) \\
           x_{j,2}   &       \tau(x_{j,1}) & p \tau^2(x_{j,n_t}) & \ldots & p \tau^{n_t-1}(x_{j,3}) \\
           x_{j,3}   &       \tau(x_{j,2}) &   \tau^2(x_{j,1}) & \ldots & p \tau^{n_t-1}(x_{j,4}) \\
           \vdots & \vdots         & \vdots          &        & \vdots              \\
           x_{j,n_t}   &       \tau(x_{j,n_t-1}) &   \tau^2(x_{j,n_t-2}) & \ldots &   \tau^{n_t-1}(x_{j,1}) \\
         \end{array}
       \right)
$$
as usual.

Now for any $j=0, \dots, Un_t-1$ we have
$$
\sigma^{j}(x_{m,h}) = \sum_{l=1}^{2Un_t} u_{m,h,l} \sigma^{j}(\gamma_l)
$$
where $u_{m,h,l} \in \Z$ and $|u_{m,h,l}| \leq N_{m}$ for all $m$, $h$ and $l$.

Then the determinant $\det(A)$ is a sum consisting of $Un_t!$ elements of form
$$
p^{-f} \prod_{j=0}^{Un_t}  \sigma^{j}(x_{m_j,h_j}) = p^{-f} \prod_{j=0}^{Un_t} ( \sum_{l=1}^{2Un_t} u_{m_j,h_j,l} \sigma^{j}(\gamma_l) )
$$
where $f \leq kUn_t$ and $m_j$ gets exactly $n_t$ times all the values $1, \dots, U$ and $h_j$ gets values from $\{ 1, \dots, n_t \}$.

Now substituting $\gamma_{l}=\sum_{a \in S_1 \cup S_2} s_{l,a} a$ gives that the determinant is a sum consisting of elements of form
$$
p^{-f} \prod_{j=0}^{Un_t} ( \sum_{l=1}^{2Un_t} u_{m_j,h_j,l} \sum_{a \in S_1 \cup S_2} s_{l,a} \sigma^{j}(a) ).
$$

We also write
$$
\sigma^{j}(a)=\sum_{a \in S_1 \cup S_2} t_{j,a} a
$$
where $t_{j,a} \in \Q$ for all $j,a$ and find that $p^{-f} \prod_{j=0}^{Un_t} ( \sum_{l=1}^{2Un_t} u_{m_j,h_j,l} \sum_{a \in S_1 \cup S_2} s_{l,a} \sigma^{j}(a) )$ can be written as a sum of elements of form
$$
K_{1} p^{-f} \sum_{a \in S_1 \cup S_2} u_a a
$$
where $K_{1} \in \Q$ is some constant, $u_a \in \Z$, and $u_a = \OO((N_1 \dots N_U)^{n_t})$.

Writing also $p$ using basis $S_1 \cup S_2$ we see that the whole determinant $\det(A)$ can be written as a sum of elements of form
$$
\sum_{a \in S_1 \cup S_2} u_{a}' a
$$
multiplied by some constant $K_{2}$ and here we have $u_{a}' \in \Z$, and $u_{a}' = \OO((N_1 \dots N_U)^{n_t})$.

On the other hand we know that $\det(A) \in F$ so by uniqueness of basis representation we know that $\det(A)$ is a sum consisting of elements of form
$$
\sum_{a \in S_1} u_{a}' a = \sum_{m=0}^{U-1} u_{\delta^{m}}' \delta^{m} + \beta \sum_{m=0}^{U-1} u_{\delta^{m}\beta}' \delta^{m}
$$
and hence
$$
|\det(A)| = |K_{2}| |\sum_{l=0}^{U-1} H_{l} \delta^{l} + \beta \sum_{l=0}^{U-1} J_{l} \delta^{l}|,
$$
where $H_l , J_l \in \Z$ and $|H_l| , |J_l|$ are of size $\OO((N_1 \cdots N_U)^{n_t})$ for all $l = 0, \dots, U-1$.

Using the fact that $\delta$ is real we get
$$
|\det(A)| \geq \frac{K_{2}}{2} (|\sum_{l=0}^{U-1} H_{l} \delta^{l}| + |\sum_{l=0}^{U-1} J_{l} \delta^{l}|).
$$

Now using \ref{algebraicapprox} and noticing that $\deg(\delta) = U$ we have
$$
|\det(A)| \geq \frac{K}{(N_{1} \cdots N_{U})^{(U-1)n_t}},
$$
where $K$ is some positive constant.

\end{proof}

\begin{corollary}\label{pigeon2}
For a code $C_{U,n_t} \in \mathfrak{C}_{U,n_t}$ there exists constants $k>0$ and $K>0$ such that
$$
\frac{k}{N^{(U-1)n_t}} \leq D(N_1=N, N_2= \ldots =N_U=1) \leq \frac{K}{N^{(U-1)n_t}}.
$$
\end{corollary}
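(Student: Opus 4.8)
The plan is to obtain Corollary \ref{pigeon2} as an immediate consequence of combining two results already established in the paper. For the upper bound, I would invoke Theorem \ref{pigeonmultiantenna}, which gives a constant $k>0$ with $D(N_1=N, N_2=\dots=N_U=1) \leq \frac{k}{N^{(U-1)n_t}}$ for \emph{any} full-rate $U$-user lattice code; since every $C_{U,n_t} \in \mathfrak{C}_{U,n_t}$ is full-rate and satisfies the rank criterion (as shown in the preceding theorem), the hypothesis is met. Note that the roles of the names $k$ and $K$ in the corollary statement are swapped relative to Theorems \ref{pigeonmultiantenna} and \ref{inertdecay3}, so a small bookkeeping remark reconciling the notation would be in order.

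For the lower bound, I would specialize Theorem \ref{inertdecay3} to the case $N_1 = N$ and $N_2 = N_3 = \dots = N_U = 1$. That theorem provides a constant $K'>0$ with $D(N_1,\dots,N_U) \geq \frac{K'}{(N_1 N_2 \cdots N_U)^{(U-1)n_t}}$; substituting the specialized values makes the denominator equal to $N^{(U-1)n_t}$, yielding $D(N_1=N, N_2=\dots=N_U=1) \geq \frac{K'}{N^{(U-1)n_t}}$. Renaming $K'$ to match the $k$ appearing on the left of the corollary's double inequality finishes this side.

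Putting the two bounds together gives exactly the displayed chain of inequalities, with the constants being those supplied (up to renaming) by the two cited results. There is essentially no genuine obstacle here: the corollary is a packaging statement whose entire mathematical content has already been discharged. The only thing requiring any care is the cosmetic clash of the symbols $k$ and $K$ between the two source statements and the corollary, which I would handle by explicitly stating which constant comes from which theorem before asserting the conclusion.

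\begin{proof}
The upper bound is Theorem \ref{pigeonmultiantenna} applied to the code $C_{U,n_t}$, which is a full-rate $U$-user lattice code; this yields a constant (call it $K$) with $D(N_1=N, N_2=\dots=N_U=1) \leq \frac{K}{N^{(U-1)n_t}}$. For the lower bound, set $N_1=N$ and $N_2=\dots=N_U=1$ in Theorem \ref{inertdecay3}; then $N_1 N_2 \cdots N_U = N$, so the bound of that theorem becomes $D(N_1=N, N_2=\dots=N_U=1) \geq \frac{k}{N^{(U-1)n_t}}$ for a suitable constant $k>0$. Combining the two estimates gives the claim.
\end{proof}
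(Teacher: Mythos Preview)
Your proposal is correct and matches the paper's approach: the corollary is stated without proof in the paper precisely because it is the immediate combination of Theorem \ref{pigeonmultiantenna} (upper bound) with the specialization $N_1=N$, $N_2=\dots=N_U=1$ of Theorem \ref{inertdecay3} (lower bound). Your handling of the swapped constant names is appropriate and nothing further is needed.
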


\section{Examples}\label{examples}
Let us now give few examples of our general code constructions.
In table Table 1 we have  collected  some examples of suitable fields $K$ and $L$ and inert primes $p$,  fulfilling the conditions of Proposition \ref{existence}. If $K=\Q(i)$ then   $p_i\OO_K$ refers to the inert prime and if $K=\Q(\sqrt{-3})$ then $p_{\sqrt{-3}}\OO_K$ is inert. The inert primes and fields $L$ are found by looking at totally real subfields of  $\Q(\zeta_m)/\Q$ and then composing  them  with the field $K$.

\begin{table}[h!]\label{table1}\caption{}
$$
\begin{array}{ccccc}
\hline
           [L:K] & L                                                              & p_i & p_{\sqrt{-3}} \\
\hline
           3     & K(\zeta_7+\zeta_{7}^{-1})                                      & 2+i & \sqrt{-3} \\
           4     & K(\zeta_{17}+\zeta_{17}^{4}+\zeta_{17}^{-4}+\zeta_{17}^{-1})   & 2+i & \sqrt{-3}\\
           5     & K(\zeta_{11}+\zeta_{11}^{-1})                                  & 1+i & 2+\sqrt{-3}\\
           6     & K(\zeta_{13}+\zeta_{13}^{-1})                                  & 1+i & 2+\sqrt{-3}\\
           7     & K(\zeta_{29}+\zeta_{29}^{12}+\zeta_{29}^{-12}+\zeta_{29}^{-1}) & 1+i & \sqrt{-3}\\
\hline
\end{array}
$$
\end{table}

\subsection{Actual examples}
We get a code $C_{3,1}=C_{3,1}(\Q(i,\zeta_7+\zeta_{7}^{-1}),2+i,\sigma,1)$ \emph{i.e.} 3-user code with each user having 1 antenna by setting $L=K(\zeta_7+\zeta_{7}^{-1})$, $K=\Q(i)$, $p=2+i$, and $Gal(L/K)=<\sigma>$. Now the actual code consists of matrices
$$
\left(
         \begin{array}{ccc}
          p^{-1} x   &  \sigma(x)       & \sigma^2(x)         \\
           y         & p^{-1} \sigma(y) & \sigma^2(y)         \\
           z         &  \sigma(z)       & p^{-1} \sigma^2(z)  \\
         \end{array}
       \right)
$$
where $x,y,z \in \OO_{L}^{*}$.

We get a code $C_{2,2}=C_{2,2}(\Q(\sqrt{-3},\zeta_{17}+\zeta_{17}^{4}+\zeta_{17}^{-4}+\zeta_{17}^{-1}),\sqrt{-3},\sigma,2)$ \emph{i.e.} 2-user code with each user having 2 antennas by setting $L=K(\zeta_{17}+\zeta_{17}^{4}+\zeta_{17}^{-4}+\zeta_{17}^{-1})$, $K=\Q(\sqrt{-3})$, $p=\sqrt{-3}$, $k=2 > \frac{U(n_t-1)}{2}$, and $Gal(L/K)=<\sigma>$. Now the actual code consists of matrices
$$
\left(
         \begin{array}{cccc}
          p^{-2} x_1   &     p^{-1} \sigma^2(x_2)  &  \sigma(x_1)        &  p\sigma^3(x_2) \\
          p^{-2} x_2   &     p^{-2}  \sigma^2(x_1) &  \sigma(x_2)        &  \sigma^3(x_1) \\
           y_1         &     p \sigma^2(y_2)       &  p^{-2} \sigma(y_1) & p^{-1} \sigma^3(y_2) \\
           y_2         &       \sigma^2(y_1)       &  p^{-2} \sigma(y_2) & p^{-2}  \sigma^3(y_1) \\
         \end{array}
       \right)
$$
where $x_1,x_2,y_1,y_2 \in \OO_L$ and $x_1 \neq 0$ or $x_2 \neq 0$ and $y_1 \neq 0$ or $y_2 \neq 0$.

\section{Conclusion }
We  gave a  general construction for MIMO MAC codes satisfying the generalized rank criteria and gave a lower bound for their decay.
As a corollary we got that all our codes  do achieve the pigeon hole bound.

\section*{Acknowledgement}
 The research of  T. Ernvall is supported in part  by the Academy of Finland grant 131745.
The research of  R. Vehkalahti is supported  by the Academy of Finland  grants 131745 and 252457.
The authors would like to thank Jyrki Lahtonen for suggesting this problem.


\end{document}